\crefname{hypothesis}{Hypothesis}{Hypotheses}
\title{High-order proximal point algorithm  for the monotone variational inequality problem and its application\thanks{Submitted to the editors
DATE.
}}
\author{Jingyu Gao\thanks{Aerospace Information Research Institute, Chinese    Academy of Sciences; School of Electronic, Electrical and Communication Engineering, University of Chinese Academy of Sciences; Key Laboratory of Technology in Geo-Spatial Information Processing and Application System, Chinese Academy of Sciences 
  .}
\and Xiurui Geng\thanks{Aerospace Information Research Institute, Chinese    Academy of Sciences; School of Electronic, Electrical and Communication Engineering, University of Chinese Academy of Sciences; Key Laboratory of Technology in Geo-Spatial Information Processing and Application System, Chinese Academy of Sciences 
  (\email{genxr@sina.com.cn}).}
}
\begin{document}

\maketitle

\begin{abstract}
  The proximal point algorithm (PPA) has been developed to solve the monotone variational inequality problem. It provides a theoretical foundation for some methods, such as the augmented Lagrangian method (ALM) and the alternating direction method of multipliers (ADMM). This paper generalizes the PPA to the $p$th-order ($p\geq 1$) and proves its convergence rate $O \left(1/k^{p/2}\right)$ . Additionally, the $p$th-order ALM is proposed based on the $p$th-order PPA. Some numerical experiments are presented to demonstrate the performance of the $p$th-order ALM.
\end{abstract}

\begin{keywords}
Convex optimization, Monotone variational inequality, Proximal point algorithm, Augmented Lagrangian method
\end{keywords}

\begin{MSCcodes}
65K15 65K10 90C25
\end{MSCcodes}

\section{Introduction}
This paper is inspired by the recent work of Nesterov \cite{nesterov2021inexact2,nesterov2021inexact} where the high-order proximal point method is proposed to minimize a $p$ times continuously differentiable and convex function $f$. The $p$th-order proximal-point operator is defined as \cite{nesterov2021inexact2}
\begin{equation}\label{new1}
  \mathsf{P}\mathrm{rox}_{\lambda f}^{p}\left(x\right)  : = \arg \min \left\{\lambda f\left(y\right) +\frac{1}{p+1} \left\lVert y-x\right\rVert ^{p+1}\vert \; y\in \mathbb{R} ^{n}\right\}, \quad \lambda >0.   
\end{equation}
Consider the following optimization problem
\begin{equation}\label{new2}
  \min \left\{f\left(x\right)| \; x\in \mathbb{R} ^{n} \right\},
\end{equation}
where $ f:\mathbb{R} ^{n} \to \mathbb{R} $ is a $p$ times continuously differentiable closed convex function. To solve the above problem, the $p$-order proximal point algorithm (PPA) can be expressed as 
\begin{equation}\label{new3}
    x^{k+1} = \mathsf{P}\mathrm{rox}_{\lambda f}^{p}\left(x^{k}\right), \quad k\geq 0,
\end{equation}
where $x^{k}$ is the approximate solution obtained after $k$ iterations. Its convergence rate  is that $f(x^{k})-\min f \leq O (1/k^{p})$. Moreover, the accelerated version of the $p$-order PPA for \cref{new2} converges as  $O (1/k^{p+1})$. Indeed, the optimization problem in \cref{new2} is equivalent to find a solution $x^{\ast }$ in $\mathbb{R} ^{n}$, which satisfies the following inequality
\begin{equation}\label{new4}
  (x-x^{\ast })^{T}\nabla f(x^{\ast })\geq 0, \quad \forall x \in \mathbb{R} ^{n}.
\end{equation}
In this paper, we consider the more general problem: the monotone variational inequality (VI) problem. The monotone VI problem in $\mathbb{R} ^{n}$ is to find $x^{\ast } \in \Omega $ such that 
\begin{equation}\label{new5}
  \mathrm{VI}(\Omega,F)  \qquad \qquad (x-x^{\ast })^{T}F(x^{\ast })\geq 0, \quad \forall x \in \Omega,
\end{equation}
where $\Omega$ is a nonempty closed convex set in $\mathbb{R} ^{n}$  and $F$ is a monotone mapping. The monotone VI problem plays a central role in nonlinear analysis, and it  gives a general description of some mathematical problems including optimization problems, complementarity problems, and finding Nash equilibria. The PPA is a fundamental method to  solve $\mathrm{VI}(\Omega,F)$ \cite{gol1979modified, nemirovski2004prox}, and it is the root of a number of famous methods such as the augmented Lagrangian method (ALM) \cite{hestenes1969multiplier, powell1969method}, the alternating direction method of multipliers (ADMM) \cite{boyd2011distributed, hong2017linear}, the Douglas-Rachford
operator splitting method (DRSM) \cite{lions1979splitting, combettes2007douglas}, and so on. 

The iterative scheme of the PPA for $\mathrm{VI}(\Omega,F)$ is given by
\begin{equation}\label{new6}
   x^{k+1}\in \Omega, \quad (x-x^{k+1})^{T}{[\lambda F(x^{k+1})+(x^{k+1}-x^{k})]}\geq 0, \quad \forall x\in \Omega,
\end{equation}
where $\lambda$ is the proximal parameter. As discussed in \cite{nemirovski2004prox}, the PPA is  convergent with the $O (1/ k)$ rate, where $k$ denotes the iteration number. Note that the methods originated from the original PPA such as ALM and ADMM have the same convergence rate $O (1/k)$ \cite{he20121, he2015non}. 

In this work, to accelerate the original PPA for $\mathrm{VI}(\Omega,F)$, we replace  $(x^{k+1}-x^{k})$ in \cref{new6} by $\left\lVert x^{k+1}-x^{k}\right\rVert^{p-1} (x^{k+1}-x^{k})$ with $p\geq 1$, and the corresponding method is named the $p$th-order PPA. Additionally, the convergence rate of the $p$th-order PPA is given. Based on the $p$th-order PPA, we present the $p$th-order ALM for the linearly constrained convex optimization problem, and prove that the $p$th-order ALM can be justified by using the $p$th-order PPA in the dual space.


The paper is organized as follows. In \cref{sec2}, we introduce some basic  notations and properties. In \cref{PPA}, we prove the 
convergence rate of the $p$th-order PPA. In \cref{sec4}, we develop the $p$th-order ALM based the $p$th-order PPA.  In \cref{sec6}, we conduct some numerical experiments to demonstrate the performance of the $p$th-order ALM with  some different options for  $p$.

\section{Preliminaries and notation}\label{sec2}
Given $x = \left[x_1, x_2,\ldots ,x_n\right]^{T} \in \mathbb{R} ^{n}$, we denote the $\ell_{2}$ norm and $\ell_{1}$ norm as
\begin{equation}\label{notation1}
   \left\lVert x\right\rVert = \sqrt{\sum_{i = 1}^{n} x_i^{2} }, \qquad \left\lVert x\right\rVert _1 = \sum_{i = 1}^{n}  \left\lvert x_i\right\rvert, \nonumber
\end{equation}
respectively. In the following, we make extensive use of the conjugate of a convex function. Given a convex function $f$, its conjugate function $f^{\ast }$ is defined as 
\begin{equation}\label{notation2}
  f^{\ast }(y)=\mathop {\sup }\limits_x  x^{T}y-f(x). \nonumber
\end{equation}
For example, when $f(x) = I_{\left\{b\right\}} (x)$, where $I_{\left\{b\right\}} (x)$ is the characteristic function of a single point set $ \left\{b\right\} $, it holds that
\begin{equation}\label{notation3}
  f^{\ast }(y)=b^{T}y. 
\end{equation}
The following lemma gives an important property of the conjugate function \cite{boyd2004convex}.
\begin{lemma}\label{newlemma_3}
  Suppose $f\left(x\right) $ is a closed proper convex function, and $f^{\ast }\left(y\right) $ is the corresponding conjugate function. Then,
  \begin{equation}\label{new45}
      y\in \partial f\left(x\right) \Leftrightarrow x \in \partial f^{\ast }\left(y\right).
  \end{equation}
\end{lemma}

Finally, for the sake of the notation, we denote
\begin{equation}\label{notation4}
  i_p(x) = \left\{ {\begin{array}{*{20}{r}}
    \frac{x}{\left\lVert x\right\rVert ^{1-\frac{1}{p} }}\qquad x\neq 0\\
    0  \quad  \qquad x=0
    \end{array}} \right. .
\end{equation}
Note that $i_p(x)$ is the gradient of the convex function $\frac{1}{1+\frac{1}{p} } \left\lVert x\right\rVert ^{1+\frac{1}{p}}$.

\section{Convergence analysis of the pth-order PPA}\label{PPA}
The $p$th-order PPA to solve $\mathrm{VI}(\Omega,F)$ is listed in \cref{algo1}.
\begin{algorithm} 
  \caption{$p$th-order PPA}
  \label{algo1}
  \begin{algorithmic}[1]
  \STATE{Require: $u_0 \in \mathbb{R}^{n},\; \lambda >0, \; p\geq 1$}
  \FOR{$k = 0,1,2,\ldots $}
  \STATE\label{line1}{Find $x^{k+1} \in \Omega$ satisfies that 
  \begin{equation} \label{new7}
     (x-x^{k+1})^{T}{[\lambda F(x^{k+1})+\left\lVert x^{k+1}-x^{k}\right\rVert^{p-1} (x^{k+1}-x^{k})]}\geq 0, \quad \forall x\in \Omega.
  \end{equation}
  }
  \ENDFOR
  \end{algorithmic}
\end{algorithm}

We assume the solution set $\Omega^{\ast }$ is nonempty.  For $\mathrm{VI}(\Omega,F)$, we can define $\widetilde{x} $ as an $\varepsilon $-approximate solution $(\varepsilon >0)$ which satisfies 
\begin{equation}\label{new8}
  \widetilde{x} \in \Omega, \qquad \qquad (x-\widetilde{x})^{T}F(\widetilde{x})\geq -\varepsilon, \qquad \forall x \in B\left(\widetilde{x}\right),
\end{equation}
where $B\left(\widetilde{x}\right)=\left\{x \in \Omega | \left\lVert x-\widetilde{x}\right\rVert \leq 1\right\}  $.
The goal of this section is to establish the global rate of convergence of the $p$th-order PPA for $\mathrm{VI}(\Omega,F)$, and we will see that the $p$th-order PPA converges much faster.
\begin{lemma}\label{newlemma_1}
  Let the sequence $\left\{x^{k} \right\}_{k\geq  0} $ be generated by \cref{algo1}, and $ x^{\ast } \in \Omega^{\ast } $. Then, for any $k\geq 0 $ we have
  \begin{equation}\label{new9}
      \left\lVert x ^{k+1}-x ^{\ast }\right\rVert ^{2}\leq       \left\lVert x ^{k}-x ^{\ast }\right\rVert ^{2}-      \left\lVert x ^{k+1}-x ^{k }\right\rVert ^{2}.
  \end{equation}  
\end{lemma}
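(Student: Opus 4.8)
The plan is to exploit the variational inequality satisfied by the iterate $x^{k+1}$ together with the monotonicity of $F$, and then feed the resulting inner-product inequality into the cosine-rule expansion of $\lVert x^{k}-x^{\ast}\rVert^{2}$. First I would record the one-sided bound $(x^{k+1}-x^{\ast})^{T}F(x^{k+1})\geq 0$. This follows because $x^{\ast}\in\Omega^{\ast}$ solves $\mathrm{VI}(\Omega,F)$ and $x^{k+1}\in\Omega$, so \cref{new5} gives $(x^{k+1}-x^{\ast})^{T}F(x^{\ast})\geq 0$; adding the monotonicity inequality $(x^{k+1}-x^{\ast})^{T}\!\left(F(x^{k+1})-F(x^{\ast})\right)\geq 0$ cancels the $F(x^{\ast})$ term and leaves the claimed bound.

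Next I would set $x=x^{\ast}$ in the algorithmic inequality \cref{new7}, which is legitimate since $x^{\ast}\in\Omega$. After isolating the two summands, the term carrying $\lambda F(x^{k+1})$ is discarded using the nonnegativity just established, leaving
\[
  \lVert x^{k+1}-x^{k}\rVert^{p-1}\,(x^{\ast}-x^{k+1})^{T}(x^{k+1}-x^{k})\;\geq\;\lambda\,(x^{k+1}-x^{\ast})^{T}F(x^{k+1})\;\geq\;0.
\]
Because the scalar $\lVert x^{k+1}-x^{k}\rVert^{p-1}$ is nonnegative for $p\geq 1$, I would split into two cases: if $x^{k+1}=x^{k}$ then the target estimate \cref{new9} reduces to an equality and there is nothing to prove; otherwise this scalar is strictly positive and may be divided out, yielding the purely geometric inequality $(x^{k+1}-x^{\ast})^{T}(x^{k+1}-x^{k})\leq 0$.

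Finally I would expand $\lVert x^{k}-x^{\ast}\rVert^{2}=\lVert(x^{k}-x^{k+1})+(x^{k+1}-x^{\ast})\rVert^{2}$ to obtain
\[
  \lVert x^{k+1}-x^{\ast}\rVert^{2}=\lVert x^{k}-x^{\ast}\rVert^{2}-\lVert x^{k+1}-x^{k}\rVert^{2}+2\,(x^{k+1}-x^{\ast})^{T}(x^{k+1}-x^{k}),
\]
and observe that the cross term is exactly the quantity shown to be $\leq 0$ in the previous step. Dropping this nonpositive term can only increase the right-hand side, which delivers \cref{new9}.

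I do not anticipate a serious obstacle, since this is the higher-order analogue of the classical Fej\'er-monotonicity argument for the PPA. The only point demanding care is the extra factor $\lVert x^{k+1}-x^{k}\rVert^{p-1}$: unlike the $p=1$ case, where it is identically $1$, for $p>1$ it can vanish, so the division that converts the weighted inner-product inequality into the clean inequality $(x^{k+1}-x^{\ast})^{T}(x^{k+1}-x^{k})\leq 0$ must be justified by the case distinction above rather than performed unconditionally. Everything else reduces to monotonicity, the admissibility of the test point $x=x^{\ast}$, and the elementary norm identity.
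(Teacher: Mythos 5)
Your proposal is correct and follows essentially the same route as the paper's proof: substitute $x=x^{\ast}$ into \cref{new7}, use monotonicity of $F$ together with \cref{new5} to drop the $\lambda F(x^{k+1})$ term, and then convert the resulting sign condition on the cross term into \cref{new9} via the norm expansion (the paper's identity \cref{new13}). Your explicit case distinction for $x^{k+1}=x^{k}$, where the factor $\lVert x^{k+1}-x^{k}\rVert^{p-1}$ vanishes and cannot be divided out, is a small point of added care that the paper's write-up leaves implicit.
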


\begin{proof}
  Setting $x$ in \cref{new7} as $x^{\ast }$, we have
  \begin{equation}\label{new10}
     \left\lVert x^{k+1}-x^{k}\right\rVert ^{p-1}(x^{\ast }-x^{k+1})^{T}(x^{k+1}-x^{k})\geq \lambda (x^{k+1}-x^{\ast })^{T}F(x^{k+1}).
  \end{equation}  
  Since $F$ is monotone, it holds that 
  \begin{equation}\label{new11}
    (x^{k+1}-x^{\ast })F(x^{k+1}) \geq (x^{k+1}-x^{\ast })^{T}F(x^{\ast}).
  \end{equation} 
  Combining \cref{new10}, \cref{new11}, and \cref{new5}, we get
  \begin{equation}\label{new12}
    \left\lVert x^{k+1}-x^{k}\right\rVert ^{p-1}(x^{\ast }-x^{k+1})^{T}(x^{k+1}-x^{k})\geq 0.
 \end{equation} 
 Note that the  identity
 \begin{equation}\label{new13}
     a^{T}b=\frac{1}{2} \left(\left\lVert a\right\rVert ^{2}+\left\lVert b\right\rVert ^{2}-\left\lVert a-b\right\rVert ^{2}\right) 
 \end{equation}
 holds for all $a$ and $b$. Setting $a=x ^{\ast }-x ^{k+1}$ and $b=x ^{k }-x ^{k+1}$ in \cref{new13}, and then combining with \cref{new12}, we finally obtain \cref{new9}. 
\end{proof}

\cref{newlemma_1} guarantees that the sequence $\{ \left\lVert x ^{k}-x ^{\ast }\right\rVert\} $ monotonically decreases. Thus, similar to the original PPA ($p=1$), the $p$th-order PPA ($p>1$) is also a contraction method. In the following lemma, we prove that $\{ \left\lVert x ^{k+1}-x ^{k }\right\rVert\} $ is also monotonically decreasing.

\begin{lemma}\label{newlemma_2}
  Let the sequence $\left\{x^{k} \right\}_{k\geq  0} $ be generated by \cref{algo1}. Then, for any $k\geq 1 $ we have
  \begin{equation}\label{new14}
    \left\lVert x ^{k+1}-x ^{k}\right\rVert \leq     \left\lVert x ^{k}-x ^{k-1}\right\rVert.
  \end{equation}
\end{lemma}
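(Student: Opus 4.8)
The plan is to exploit the variational inequality \cref{new7} at two consecutive iterations and combine them via the monotonicity of $F$, in the same spirit as the proof of \cref{newlemma_1}. First I would write down \cref{new7} for the iterate $x^{k+1}$ (with proximal center $x^k$) and for the iterate $x^k$ (with proximal center $x^{k-1}$). In the former I would substitute $x = x^k$, and in the latter I would substitute $x = x^{k+1}$, obtaining the two inequalities
\begin{equation}\label{planeq1}
  (x^{k}-x^{k+1})^{T}\bigl[\lambda F(x^{k+1})+\left\lVert x^{k+1}-x^{k}\right\rVert^{p-1}(x^{k+1}-x^{k})\bigr]\geq 0,
\end{equation}
\begin{equation}\label{planeq2}
  (x^{k+1}-x^{k})^{T}\bigl[\lambda F(x^{k})+\left\lVert x^{k}-x^{k-1}\right\rVert^{p-1}(x^{k}-x^{k-1})\bigr]\geq 0.
\end{equation}

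Next I would add \cref{planeq1} and \cref{planeq2}. The two $F$-terms collapse into $\lambda(x^{k+1}-x^{k})^{T}\bigl(F(x^{k})-F(x^{k+1})\bigr)$, which is nonpositive because $F$ is monotone, so it can be discarded to preserve the inequality direction. What remains is a relation among the proximal terms only. Using $(x^{k}-x^{k+1})^{T}(x^{k+1}-x^{k})=-\left\lVert x^{k+1}-x^{k}\right\rVert^{2}$, the first proximal term becomes exactly $-\left\lVert x^{k+1}-x^{k}\right\rVert^{p+1}$, leading to
\begin{equation}\label{planeq3}
  \left\lVert x^{k+1}-x^{k}\right\rVert^{p+1}\leq \left\lVert x^{k}-x^{k-1}\right\rVert^{p-1}(x^{k+1}-x^{k})^{T}(x^{k}-x^{k-1}).
\end{equation}

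Finally I would bound the inner product on the right-hand side of \cref{planeq3} by Cauchy--Schwarz, giving $\left\lVert x^{k+1}-x^{k}\right\rVert^{p+1}\leq \left\lVert x^{k+1}-x^{k}\right\rVert\,\left\lVert x^{k}-x^{k-1}\right\rVert^{p}$. Dividing through by $\left\lVert x^{k+1}-x^{k}\right\rVert$ (treating the case $x^{k+1}=x^{k}$ separately, where \cref{new14} holds trivially) and taking the $p$th root yields the claim. I do not anticipate a serious obstacle here; the only points requiring care are the correct cancellation of the $F$-terms through monotonicity and the bookkeeping of the $\left\lVert \cdot\right\rVert^{p-1}$ weights so that the exponents combine to give a clean $p$th power after applying Cauchy--Schwarz.
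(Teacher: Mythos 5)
Your proposal is correct and follows essentially the same route as the paper: the paper likewise instantiates \cref{new7} at the two consecutive iterations with $x=x^{k}$ and $x=x^{k+1}$ respectively, combines them with the monotonicity of $F$ to arrive at exactly your inequality \cref{planeq3} (the paper's \cref{new18}), and finishes with Cauchy--Schwarz. The only cosmetic difference is that you add the two inequalities and discard the $F$-terms in one step, whereas the paper chains them; your explicit handling of the degenerate case $x^{k+1}=x^{k}$ is a small point of extra care the paper leaves implicit.
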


\begin{proof}
  Setting $x$ in \cref{new7} as $x^{k }$, we have
  \begin{equation}\label{new15}
    \left\lVert x^{k+1}-x^{k}\right\rVert ^{p-1}(x^{k }-x^{k+1})^{T}(x^{k+1}-x^{k})\geq \lambda (x^{k+1}-x^{k })^{T}F(x^{k+1}).
 \end{equation}  
  Replacing $x^{k+1}$ by $x^{k}$ in \cref{new7} and setting $x$ in the new variation inequality as $x^{k+1 }$, we obtain 
  \begin{equation}\label{new16}
    \left\lVert x^{k}-x^{k-1}\right\rVert ^{p-1}(x^{k }-x^{k-1})^{T}(x^{k+1}-x^{k})\geq \lambda (x^{k}-x^{k+1 })^{T}F(x^{k}).
 \end{equation} 
  Since $F$ is monotone, it holds that 
  \begin{equation}\label{new17}
    (x^{k+1}-x^{k })^{T}(F(x^{k+1})-F(x^{k})) \geq 0.
  \end{equation} 
  Putting the inequalities \cref{new15}, \cref{new16} and \cref{new17} together, we obtain
  \begin{equation}\label{new18}
    \left\lVert x^{k}-x^{k-1}\right\rVert ^{p-1}(x^{k }-x^{k-1})^{T}(x^{k+1}-x^{k})\geq \left\lVert x^{k+1}-x^{k}\right\rVert ^{p+1}.
  \end{equation}
  Note that $\left(x ^{k+1}-x ^{k}\right) ^{T} \left(x ^{k}-x ^{k-1}\right)\leq \left\lVert x ^{k+1}-x ^{k}\right\rVert \left\lVert x ^{k}-x ^{k-1}\right\rVert $, we have
  \begin{equation}\label{new19}
    \left\lVert x ^{k+1}-x ^{k}\right\rVert ^{p+1}\leq \left\lVert x ^{k}-x ^{k-1}\right\rVert ^{p} \left\lVert x ^{k+1}-x ^{k}\right\rVert,
 \end{equation}
  which implies \cref{new14} holds.
\end{proof}

Now, we are ready to prove the $ O \left(1/k^{p/2}\right) $ convergence rate of the $p$th-order PPA.

\begin{theorem}\label{newTheorem1}
  Let the sequence $\left\{x^{k} \right\}_{k\geq  0} $ be generated by \cref{algo1}. Then, for any $k\geq 0 $ we have
  \begin{equation}\label{new20}
    (x-x^{k+1})^{T}F(x^{k+1})
     \geq -\frac{1}{\lambda } \frac{\left\lVert x ^{0}-x ^{\ast }\right\rVert ^{p}}{\left(k+1\right) ^{p/2}} , \qquad \forall x \in B\left(x^{k+1}\right),
  \end{equation}
  where $B\left(x^{k+1}\right)=\left\{x \in \Omega | \left\lVert x-x^{k+1}\right\rVert \leq 1\right\}  $.
\end{theorem}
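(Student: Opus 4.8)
The plan is to convert the defining variational inequality \cref{new7} into a lower bound on $(x-x^{k+1})^{T}F(x^{k+1})$ expressed through the step length $\left\lVert x^{k+1}-x^{k}\right\rVert$, and then to control that step length with the two preceding lemmas.

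First I would rearrange \cref{new7}: for every $x\in\Omega$ it gives
\[
  (x-x^{k+1})^{T}F(x^{k+1})\geq -\frac{1}{\lambda}\left\lVert x^{k+1}-x^{k}\right\rVert^{p-1}(x-x^{k+1})^{T}(x^{k+1}-x^{k}).
\]
Restricting $x$ to $B(x^{k+1})$, so that $\left\lVert x-x^{k+1}\right\rVert\leq 1$, and applying Cauchy--Schwarz to the inner product on the right, I get $(x-x^{k+1})^{T}(x^{k+1}-x^{k})\leq \left\lVert x-x^{k+1}\right\rVert\left\lVert x^{k+1}-x^{k}\right\rVert\leq \left\lVert x^{k+1}-x^{k}\right\rVert$. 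Since $\left\lVert x^{k+1}-x^{k}\right\rVert^{p-1}\geq 0$, this yields
\[
  (x-x^{k+1})^{T}F(x^{k+1})\geq -\frac{1}{\lambda}\left\lVert x^{k+1}-x^{k}\right\rVert^{p}, \qquad \forall x\in B(x^{k+1}).
\]
Thus the whole theorem reduces to the single estimate $\left\lVert x^{k+1}-x^{k}\right\rVert\leq \left\lVert x^{0}-x^{\ast}\right\rVert/(k+1)^{1/2}$, after which raising to the $p$th power produces the claimed rate.

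To get that step-length bound I would combine the two lemmas. Rewriting \cref{new9} as $\left\lVert x^{j+1}-x^{j}\right\rVert^{2}\leq \left\lVert x^{j}-x^{\ast}\right\rVert^{2}-\left\lVert x^{j+1}-x^{\ast}\right\rVert^{2}$ and summing over $j=0,\dots,k$ telescopes to $\sum_{j=0}^{k}\left\lVert x^{j+1}-x^{j}\right\rVert^{2}\leq \left\lVert x^{0}-x^{\ast}\right\rVert^{2}-\left\lVert x^{k+1}-x^{\ast}\right\rVert^{2}\leq \left\lVert x^{0}-x^{\ast}\right\rVert^{2}$. By \cref{new14} the sequence $\{\left\lVert x^{j+1}-x^{j}\right\rVert\}_{j\geq 0}$ is nonincreasing, so its final term $\left\lVert x^{k+1}-x^{k}\right\rVert$ is the smallest of the first $k+1$ terms; hence $(k+1)\left\lVert x^{k+1}-x^{k}\right\rVert^{2}\leq \sum_{j=0}^{k}\left\lVert x^{j+1}-x^{j}\right\rVert^{2}\leq \left\lVert x^{0}-x^{\ast}\right\rVert^{2}$. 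This is exactly the desired estimate, and substituting it into the displayed bound completes the proof.

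I do not anticipate a real obstacle, since the argument is short once \cref{newlemma_1} and \cref{newlemma_2} are available. The only delicate point is the Cauchy--Schwarz step: the localization to $B(x^{k+1})$ is what matters, because the unit bound $\left\lVert x-x^{k+1}\right\rVert\leq 1$ is precisely what collapses the factor $\left\lVert x^{k+1}-x^{k}\right\rVert^{p-1}$ coming from \cref{new7} together with the Cauchy--Schwarz factor into the single clean power $\left\lVert x^{k+1}-x^{k}\right\rVert^{p}$. I would also remark that $\left\lVert x^{k+1}-x^{k}\right\rVert^{p-1}\cdot\left\lVert x^{k+1}-x^{k}\right\rVert=\left\lVert x^{k+1}-x^{k}\right\rVert^{p}$ covers the degenerate case $x^{k+1}=x^{k}$ (both sides vanish) and the base case $p=1$ at once, so no case split is needed.
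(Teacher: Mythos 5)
Your proposal is correct and follows essentially the same route as the paper: telescoping \cref{new9}, using the monotonicity from \cref{new14} to bound $(k+1)\left\lVert x^{k+1}-x^{k}\right\rVert^{2}$ by the telescoped sum, and then feeding the resulting step-length estimate into \cref{new7} via Cauchy--Schwarz with the unit radius of $B\left(x^{k+1}\right)$ absorbing the factor $\left\lVert x-x^{k+1}\right\rVert$. The only difference is the order in which the two halves are presented, which is immaterial.
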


\begin{proof}
  First of all, we analysis the convergence of $\left\lVert x ^{k}-x ^{k+1}\right\rVert $. 

  According to \cref{newlemma_1}, we have
  \begin{equation}\label{new21}
   \begin{aligned}
   \sum\limits_{t = 0}^{k} \left\lVert x ^{t}-x ^{t+1}\right\rVert ^{2} &\leq \sum\limits_{t = 0}^{k} \left(\left\lVert x ^{t}-x ^{\ast }\right\rVert ^{2}-\left\lVert x ^{t+1}-x ^{\ast }\right\rVert ^{2}\right) \\
   & = \left\lVert x ^{0}-x ^{\ast }\right\rVert ^{2}-\left\lVert x ^{k+1}-x ^{\ast }\right\rVert ^{2}\\
   & \leq \left\lVert x ^{0}-x ^{\ast }\right\rVert ^{2}
   \end{aligned}
  \end{equation}
  In view of \cref{newlemma_2}, it holds that
  \begin{equation}\label{new22}
    (k+1)\left\lVert x ^{k}-x ^{k+1}\right\rVert^{2} \leq \sum\limits_{t = 0}^{k} \left\lVert x ^{t}-x ^{t+1}\right\rVert ^{2}.
  \end{equation}
  Combining \cref{new22} and \cref{new21}, we get
  \begin{equation}\label{add1}
    \left\lVert x ^{k}-x ^{k+1}\right\rVert^{2} \leq \frac{\left\lVert x^{0}-x^{\ast}\right\rVert^{2} }{k+1} .
  \end{equation}

  To prove the assertion \cref{new20}, it remains to note that according to \cref{new7}, we have
  \begin{equation}\label{new23}
   \begin{aligned}
    (x-x^{k+1})^{T}F(x^{k+1})&\geq \frac{1}{\lambda } \left\lVert x ^{k}-x ^{k+1}\right\rVert ^{p-1}\left(x -x ^{k+1}\right) ^{T}\left(x ^{k}-x ^{k+1}\right)\\ &\geq -\frac{1}{\lambda } \left\lVert x ^{k}-x ^{k+1}\right\rVert ^{p-1}\left\lVert x -x ^{k+1}\right\rVert \left\lVert x ^{k}-x ^{k+1}\right\rVert \\
    & = -\frac{1}{\lambda } \left\lVert x ^{k}-x ^{k+1}\right\rVert ^{p}\left\lVert x -x ^{k+1}\right\rVert \\ &\geq -\frac{1}{\lambda }\frac{\left\lVert x ^{0}-x ^{\ast }\right\rVert ^{p}}{\left(k+1\right) ^{p/2}} \left\lVert x -x ^{k+1}\right\rVert, \forall x \in \Omega.
   \end{aligned}
  \end{equation} 
\end{proof}

\begin{remark}
  \cref{newTheorem1} indicates that the $k$th iteration point  generated by the $p$th-order PPA is an approximate solution of  $\mathrm{VI}(\Omega,F)$ with an accuracy of $ O \left(1/k^{p/2}\right) $.  
  
\end{remark}
  
\section{Application to the linearly constrained convex
optimization problem} \label{sec4}
In this section, we reformulate the  linearly constrained convex
optimization problem into  a monotone VIP, and utilize the similar technique used in $p$th-order PPA to design the $p$th-order augmented Lagrangian method ($p$th-order ALM). 
\subsection{pth-order ALM}
Consider the following convex minimization problem with linear equality
constraints
\begin{equation}\label{new24}
   \min \left\{f\left(x\right)| Ax=b, x\in \mathcal{X}  \right\},
\end{equation}
where $ f:\mathbb{R} ^{n} \to \mathbb{R} $ is a closed proper convex but not necessarily smooth function,
$ \mathcal{X} \subseteq \mathbb{R} ^{n} $ is a
closed convex set, $ A \in \mathbb{R} ^{m\times n}$, and $ b \in \mathbb{R} ^{m}$. The Lagrangian function of the problem \cref{new24}
is
\begin{equation}\label{new25}
    L\left(x,\lambda \right) = f\left(x\right) +\lambda ^{T }\left(A x-b\right) ,
\end{equation}
with $\left(x,\lambda \right) \in \mathcal{X} \times \mathbb{R} ^{m}$. The pair $\left(x^{\ast } ,\lambda^{\ast } \right)$ is called a saddle point of the Lagrangian function $L\left(x,\lambda \right)$, if 
\begin{equation}\label{new26}
  L\left(x^{\ast },\lambda \right)\leq L\left(x^{\ast },\lambda^{\ast} \right) \leq L\left(x,\lambda^{\ast} \right), \quad \forall \left(x,\lambda \right) \in \mathcal{X} \times \mathbb{R} ^{m}.
\end{equation}
An equivalent expression of the saddle point is the following variational inequality:
\begin{equation}\label{new27}
  \left\{ {\begin{array}{*{20}{r}}
    {x^{\ast } \in \mathcal{X}, \qquad  \left(x-x^{\ast }\right) ^{T }  \left(\partial f(x^{\ast })+A ^{T }\lambda ^{\ast }\right)  \geq 0, \qquad \forall x \in\mathcal{X}}\\
    {\left(\lambda -\lambda ^{\ast }\right) ^{T}\left(-Ax^{\ast }+b\right)\geq 0, \quad \forall \lambda \in \mathbb{R} ^{m} }.
    \end{array}} \right.
\end{equation}

By denoting 
\begin{equation}\label{new28}
    w = \left( {\begin{array}{*{20}{c}}
    x\\
    \lambda 
    \end{array}} \right),     F\left(w\right)  = \left( {\begin{array}{*{20}{c}}
      \partial f(x)+A^{T}\lambda \\
      -Ax+b 
      \end{array}} \right), \text{and } \Omega =\mathcal{X} \times \mathbb{R} ^{m},
\end{equation}
\cref{new27} can be transformed into the compact form 
\begin{equation}\label{new29}
   w^{\ast } \in \Omega, \qquad \left(w-w^{\ast }\right) ^{T } F\left(w^{\ast }\right)   \geq 0, \quad \forall w \in \Omega.
\end{equation}
It can be easily verified that $F$ is a monotone mapping. Therefore, the optimization problem \cref{new24} is equivalent to a monotone VI problem. 

The ALM is a fundamental method to solve \cref{new24}. The iterative scheme of ALM for \cref{new24}
can be expressed as 
\begin{equation}\label{new30}
 \text{ALM} \left\{ {\begin{array}{*{20}{l}}
    {x^{k+1}\in \arg \min  \left\{f\left(x\right) + ( \lambda ^{k} ) ^T (A x-b)+\frac{\beta }{2}\left\lVert A x-b\right\rVert ^{2} \vert  x\in \mathcal{X} \right\}, }\\
    {\lambda ^{k+1} = \lambda ^{k}+\beta \left(A x^{k+1}-b\right) },
    \end{array}} \right.
\end{equation}
where the positive parameter $ \beta $ is known as the penalty parameter. The variation inequality form of \cref{new30} is
\begin{equation}\label{new31}
 \left\{ {\begin{array}{*{20}{r}}
    {x^{k+1 } \in \mathcal{X}, \qquad  \left(x-x^{k+1 }\right) ^{T }  \left(\partial f(x^{k+1})+A ^{T }\lambda ^{k+1 }\right)  \geq 0, \qquad \forall x \in\mathcal{X}}\\
    {\left(\lambda -\lambda ^{k+1 }\right) ^{T}\left(-Ax^{k+1 }+b+\frac{1}{\beta }(\lambda ^{k+1}-\lambda ^{k}) \right)\geq 0, \quad \forall \lambda \in \mathbb{R} ^{m} }.
    \end{array}} \right.
\end{equation}

Inspired by the $p$th-order PPA, we replace $(\lambda ^{k+1}-\lambda ^{k})$ in \cref{new31} by $\left\lVert \lambda ^{k+1}-\lambda ^{k}\right\rVert ^{p-1}\\(\lambda ^{k+1}-\lambda ^{k})$. The new iterative scheme is given by
\begin{equation}\label{new32}
\left\{ {\begin{array}{*{20}{r}}
    {x^{k+1 } \in \mathcal{X}, \quad  \left(x-x^{k+1 }\right) ^{T }  \left(\partial f(x^{k+1})+A ^{T }\lambda ^{k+1 }\right)  \geq 0, \quad \forall x \in\mathcal{X}}\\
    {\left(\lambda -\lambda ^{k+1 }\right) ^{T}\left(-Ax^{k+1 }+b+\frac{1}{\beta }\left\lVert \lambda ^{k+1}-\lambda ^{k}\right\rVert^{p-1}(\lambda ^{k+1}-\lambda ^{k}) \right)\geq 0,  \forall \lambda \in \mathbb{R} ^{m} }.
    \end{array}} \right.
\end{equation}
The new iterative scheme \cref{new32} is named as the $p$th-order ALM. Using the notation \cref{new28}, \cref{new32} has a compact form
\begin{equation}\label{new33}
  (w-w^{k+1})^{T}{ F(w^{k+1})+\frac{1}{\beta } \left\lVert \lambda ^{k+1}-\lambda ^{k}\right\rVert^{p-1} (\lambda -\lambda ^{k+1})^{T}(\lambda ^{k+1}-\lambda ^{k})}\geq 0, \quad \forall w\in \Omega.
\end{equation}
With the similar proof in \cref{PPA}, we can also obtain the convergence rate for the $p$th-order ALM, which is the same as the $p$th-order PPA. In the following theorem, we give the equivalent form of \cref{new32}.

\begin{theorem}\label{newTheorem2}
  Let the sequence $\left\{w^{k} = (x^{k}, \lambda ^{k})\right\}_{k\geq  0} $ be generated by the following iteration
  \begin{equation}\label{new34}
   \left\{ {\begin{array}{*{20}{l}}
    {x^{k+1} \in \arg \min  \left\{f\left(x\right) + ( \lambda ^{k} ) ^T (A x-b)+\frac{\beta^{\frac{1}{p} } }{1+\frac{1}{p} }\left\lVert A x-b\right\rVert ^{1+\frac{1}{p} } \vert x \in  \mathcal{X} \right\}}\\
    \lambda ^{k+1} =  \lambda ^{k}+\beta^{\frac{1}{p} } i_p(Ax^{k+1}-b),
    \end{array}} \right.
  \end{equation}
  where $i_p(x)$ is defined in \cref{notation4}. Then, for any $k\geq 0 $, $(x^{k}, \lambda ^{k})$ satisfies \cref{new32}
\end{theorem}

\begin{proof}
  For $x^{k+1}$, due to the first-order optimality condition, we have 
  \begin{equation}\label{new35}
    (x-x^{k+1})^{T}[\partial f(x^{k+1})+A^{T}\lambda ^{k}+\beta^{\frac{1}{p} } A^{T} i_p(Ax^{k+1}-b) ]\geq 0, \quad \forall x\in \mathcal{X} .
  \end{equation}
  Note that the update for $\lambda ^{k+1}$ in \cref{new34} is
  \begin{equation}\label{new36}
  \lambda ^{k+1} = \lambda ^{k}+\beta^{\frac{1}{p} }  i_p(Ax^{k+1}-b).
  \end{equation}
  Then, \cref{new35} can be rewritten as 
  \begin{equation}\label{new37}
    \left(x-x^{k+1}\right) ^{T}\left(\partial f(x^{k+1})+A^{T}\lambda ^{k+1}\right) \geq 0, \quad  \forall x \in \mathcal{X}.
  \end{equation}  
  From \cref{new36}, we can easily get
  \begin{equation}\label{new38}
      \left\lVert Ax^{k+1}-b\right\rVert=\frac{1}{\beta } \left\lVert \lambda ^{k}-\lambda ^{k+1}\right\rVert ^{p}. \nonumber
  \end{equation}  
  Moreover, it can be verified that
  \begin{equation}\label{new39}
      -Ax^{k+1}+b+\frac{1}{\beta } \left\lVert \lambda ^{k}-\lambda ^{k+1}\right\rVert ^{p-1}\left(\lambda ^{k+1}-\lambda ^{k}\right) =0.
  \end{equation}   
  \cref{new37} and \cref{new39} imply that  $(x^{k}, \lambda ^{k})$ satisfies \cref{new32}.
\end{proof}

\subsection{pth-order ALM from the dual perspective}
When $p=1$, the original ALM can be justified by using the PPA in the dual space \cite{rockafellar1976augmented}. Here, we generalize this basic conclusion to the $p$th-order case.

Consider the more general optimization problem
\begin{equation}\label{new40}
     \min \left\{f\left(x\right) +h\left(A x\right) | x\in \mathbb{R} ^{n}\right\} ,
\end{equation}
where $f\left(\cdot \right) $ and $h\left(\cdot \right) $ are the closed proper convex functions. Note that \cref{new40} is equivalent to  the  convex minimization problem with linear equality \cref{new24} when
$h\left(\cdot \right) $ is the characteristic function of a single point set $ \left\{b\right\} $. For \cref{new40}, we introduce an additional variable $y$, and get the equivalent problem 
\begin{equation}\label{new41}
  \min \left\{f\left(x\right) +h\left(y\right) | A x=y,x\in \mathbb{R} ^{n}\right\} .
\end{equation}
The $p$th-order ALM for \cref{new41} is as follows
\begin{equation}\label{new42}
  \left\{ {\begin{array}{*{20}{l}}
    {\left(x^{k+1},y^{k+1}\right) \in \arg \min\limits_{x,y}    \left\{f\left(x\right)+h\left(y\right) + ( \lambda ^{k} ) ^T (A x-y)+\frac{\beta^{\frac{1}{p} } }{1+\frac{1}{p} }\left\lVert A x-y\right\rVert ^{1+\frac{1}{p} }\right\}, }\\
    {\lambda ^{k+1} = \lambda ^{k}+\beta^{\frac{1}{p} }  i_p(Ax^{k+1}-y^{k+1}) }.
    \end{array}} \right.
\end{equation}

The dual problem of \cref{new40} is 
\begin{equation}\label{new43}
   \max \limits_z -f^{\ast }\left(-A^T z\right) -h^{\ast }\left(z\right) ,
\end{equation}
where $f^{\ast }\left(\cdot \right) $ and $h^{\ast }\left(\cdot \right) $ are the conjugate function of $f\left(\cdot \right) $ and $h\left(\cdot \right) $,   respectively. The $p$th-order proximal point method  \cite{nesterov2021inexact2} for the dual problem \cref{new43} is
\begin{equation}\label{new44}
  z ^{k+1} = \arg \min \limits_z \left\{f^{\ast }\left(-A^T z\right) +h^{\ast }\left(z\right)+\frac{1}{\beta} \frac{1}{p+1} \left\lVert z-z^{k}\right\rVert^{p+1} \right\} .
\end{equation}
In the following theorem, we will show the equivalent relationship between \cref{new42} and \cref{new44}.
\begin{theorem}\label{newTheorem3}
  Suppose $f\left(x\right) $ and $g\left(y\right) $ are the closed proper convex functions. Let the sequence $\left\{\left(x^{k},y^{k},\lambda ^{k}\right) \right\}_{k\geq  0} $ be generated by the high-order ALM \cref{new42}. Then, for any $k\geq 0 $ we have 
  \begin{equation}\label{new46}
      \lambda ^{k+1} \in \arg \min \limits_u \left\{f^{\ast }\left(-A^T u\right) +h^{\ast }\left(u\right)+\frac{1}{\beta} \frac{1}{p+1} \left\lVert u-\lambda ^{k}\right\rVert^{p+1} \right\}.
  \end{equation}
  where $f^{\ast }\left(\cdot \right) $ and $h^{\ast }\left(\cdot \right) $ are the conjugate function of $f\left(\cdot \right) $ and $h\left(\cdot \right) $,   respectively.
\end{theorem}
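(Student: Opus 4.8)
The plan is to show that the multiplier $\lambda^{k+1}$ produced by the primal $p$th-order ALM step \cref{new42} satisfies the first-order optimality condition of the dual proximal subproblem in \cref{new46}, and then to invoke convexity to upgrade this necessary condition to the claimed $\arg\min$ characterization. Since the dual subproblem is convex, it suffices to verify the subgradient inclusion at $\lambda^{k+1}$.

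First I would extract the optimality conditions of the joint minimization over $x$ and $y$. Recalling that $i_p(\cdot)$ is the gradient of $\frac{1}{1+1/p}\left\lVert \cdot\right\rVert^{1+1/p}$ (cf. \cref{notation4}), differentiating the ALM objective in $x$ gives $0\in\partial f(x^{k+1})+A^{T}\lambda^{k}+\beta^{1/p}A^{T}i_p(Ax^{k+1}-y^{k+1})$, while differentiating in $y$ gives $0\in\partial h(y^{k+1})-\lambda^{k}-\beta^{1/p}i_p(Ax^{k+1}-y^{k+1})$. Substituting the multiplier update $\lambda^{k+1}=\lambda^{k}+\beta^{1/p}i_p(Ax^{k+1}-y^{k+1})$ collapses these into the clean relations $-A^{T}\lambda^{k+1}\in\partial f(x^{k+1})$ and $\lambda^{k+1}\in\partial h(y^{k+1})$.

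Next I would pass to conjugates. Applying \cref{newlemma_3} to each inclusion yields $x^{k+1}\in\partial f^{\ast}(-A^{T}\lambda^{k+1})$ and $y^{k+1}\in\partial h^{\ast}(\lambda^{k+1})$. On the other hand, the optimality condition for the convex subproblem in \cref{new46} reads $0\in -A\,\partial f^{\ast}(-A^{T}\lambda^{k+1})+\partial h^{\ast}(\lambda^{k+1})+\frac{1}{\beta}\left\lVert \lambda^{k+1}-\lambda^{k}\right\rVert^{p-1}(\lambda^{k+1}-\lambda^{k})$, using that the gradient of $\frac{1}{p+1}\left\lVert \cdot-\lambda^{k}\right\rVert^{p+1}$ is $\left\lVert \cdot-\lambda^{k}\right\rVert^{p-1}(\cdot-\lambda^{k})$. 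Choosing the particular subgradient elements $x^{k+1}$ and $y^{k+1}$ produced above, the inclusion reduces to the single vector identity
\begin{equation}\nonumber
  -Ax^{k+1}+y^{k+1}+\frac{1}{\beta}\left\lVert \lambda^{k+1}-\lambda^{k}\right\rVert^{p-1}(\lambda^{k+1}-\lambda^{k})=0.
\end{equation}

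Finally I would verify this identity by direct computation. Writing $w=Ax^{k+1}-y^{k+1}$, the update gives $\lambda^{k+1}-\lambda^{k}=\beta^{1/p}i_p(w)=\beta^{1/p}\left\lVert w\right\rVert^{1/p-1}w$, whence $\left\lVert \lambda^{k+1}-\lambda^{k}\right\rVert=\beta^{1/p}\left\lVert w\right\rVert^{1/p}$. Substituting and tracking exponents, the powers of $\beta$ collapse as $\beta^{-1}\beta^{(p-1)/p}\beta^{1/p}=1$ and the powers of $\left\lVert w\right\rVert$ collapse as $\left\lVert w\right\rVert^{(p-1)/p}\left\lVert w\right\rVert^{1/p-1}=1$, so the proximal term equals exactly $w=Ax^{k+1}-y^{k+1}$ and the identity holds; the degenerate case $w=0$ (where $\lambda^{k+1}=\lambda^{k}$) is immediate. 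The main obstacle I anticipate is precisely this exponent bookkeeping — confirming that the mismatched powers $1+1/p$ on the primal side and $p+1$ on the dual side are conjugate in just the right way so that the $i_p$ nonlinearity inverts cleanly — together with the care needed to argue that the necessary optimality conditions are also sufficient, which follows from convexity of $f^{\ast}$, $h^{\ast}$, and the proximal regularizer.
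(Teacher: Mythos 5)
Your proposal is correct and follows essentially the same route as the paper: both derive the inclusions $-A^{T}\lambda^{k+1}\in\partial f(x^{k+1})$ and $\lambda^{k+1}\in\partial h(y^{k+1})$ from the optimality conditions of the primal step, pass to conjugates via \cref{newlemma_3}, and verify the dual subgradient inclusion by inverting the $i_p$ map, including the degenerate case $Ax^{k+1}-y^{k+1}=0$. The only cosmetic difference is that the paper introduces an auxiliary variable $z=Ax-y$ with its own multiplier $u$ and then identifies $u=\lambda^{k+1}$, whereas you differentiate the joint objective directly and substitute the multiplier update.
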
  

\begin{proof}
  In the $p$th-order ALM \cref{new42}, the update for $ \left(x^{k+1},y^{k+1}\right) $ is equivalent to the following optimization problem
  \begin{equation}\label{new47}
     \min \left\{f\left(x\right)+h\left(y\right)+\left(\lambda^{k}\right) ^{T }z+\beta^{\frac{1}{p} } \frac{1}{1+\frac{1}{p} }\left\lVert z\right\rVert ^{1+\frac{1}{p} } \vert \; A x-y=z\right\}.
  \end{equation}
  By introducing the Lagrangian multiplier $u$ to the constraint $ A x-y=z$, we have the Lagrangian function
  \begin{equation}\label{new48}
     L\left(x,y,z,u\right) = f\left(x\right)+h\left(y\right)+\left(\lambda^{k}\right) ^{T }z+\beta^{\frac{1}{p} }\frac{1}{1+\frac{1}{p} }\left\lVert z\right\rVert ^{1+\frac{1}{p} }+u^{T }\left(A x-y-z\right).
  \end{equation}
  Due to the optimality condition, it holds that
  \begin{equation}\label{new49}
   \left\{ {\begin{array}{*{20}{c}}
     {-A ^{T }u\in \partial f\left(x^{k+1}\right) }\\
     u \in \partial h\left(y^{k+1}\right) \\
     A x^{k+1}-y^{k+1}=z\\
     u = \lambda ^{k}+\beta^{\frac{1}{p} } \partial \left(\frac{1}{1+\frac{1}{p} } \left\lVert z\right\rVert^{1+\frac{1}{p} }\right) 
     \end{array}}. \right.
  \end{equation}
  In view of \cref{newlemma_3}, we have
  \begin{equation}\label{new50}
   \left\{ {\begin{array}{*{20}{c}}
     x^{k+1} \in \partial f^{\ast }\left(-A ^{T }u\right) \\
     y^{k+1} \in \partial h^{\ast }\left(u\right)
     \end{array}}. \right.
  \end{equation}   
  Next, we discuss the two cases of $z\neq 0$ and $z = 0$ respectively.

  Case 1 ($z\neq 0$): According to \cref{new49}, $ u = \lambda ^{k}+\beta^{\frac{1}{p} } z/ \left\lVert z\right\rVert ^{1-\frac{1}{p} }=\lambda ^{k+1}$, which implies that 
  \begin{equation}\label{new51}
       \frac{\left\lVert u-\lambda ^{k}\right\rVert }{\beta^{\frac{1}{p} } } = \left\lVert z\right\rVert ^{\frac{1}{p} }.
  \end{equation}     
  Hence, we have
  \begin{equation}\label{new52}
      z = \frac{1}{\beta } \left\lVert u-\lambda^ {k}\right\rVert ^{p-1}\left(u-\lambda^ {k}\right).
  \end{equation}      
  Combining \cref{new49}, \cref{new50}, and \cref{new52}, it holds that
  \begin{equation}\label{new53}
    0 \in -A \partial f^{\ast }\left(-A ^{T }u\right)+ \partial h^{\ast }\left(u\right)+\frac{1}{\beta } \left\lVert u-\lambda ^{k}\right\rVert ^{p-1}\left(u-\lambda ^{k}\right).
  \end{equation}   
  And this is the optimality condition of \cref{new46}.Thus, \cref{new46} holds.

  Case 2 ($z = 0$): According to \cref{new49}, it can be easily verified that 
  \begin{equation}\label{new54}
   \left\{ {\begin{aligned}
     A x^{k+1} -y^{k+1}&=0\\
     u &= \lambda ^{k}
     \end{aligned}}. \right.
  \end{equation}   
  Similarly, we have
  \begin{equation}\label{new55}
   0 \in -A \partial f^{\ast }\left(-A ^{T }u\right)+ \partial h^{\ast }\left(u\right)+\frac{1}{\beta } \left\lVert u-\lambda ^{k}\right\rVert ^{p-1}\left(u-\lambda ^{k}\right).
 \end{equation}  
 Therefore, \cref{new46} holds.
\end{proof}

\begin{remark}
  Note that \cref{new46} in \cref{newTheorem3} is the $p$th-order proximal point method for the dual problem. On the other hand, if we have 
  \begin{equation}\label{new56}
   u \in \arg \min \left\{f^{\ast }\left(-A^T u\right) +h^{\ast }\left(u\right)+\frac{1}{\beta} \frac{1}{p+1} \left\lVert u-\lambda ^{k+1}\right\rVert^{p+1} \right\}, \nonumber
  \end{equation}
  the variable $ \left(x^{k+1},y^{k+1}\right) $ in the $p$th-order ALM can be recovered by taking $ x^{k+1}\in \partial f^{\ast }\left(-A ^{T }u\right) $ and $ y^{k+1}\in \partial h^{\ast }\left(u\right) $. Therefore, the $p$th-order ALM for the primal problem is equivalent to the $p$th-order proximal point method for the dual problem. 
\end{remark}

\subsection{The subproblem in the pth-order ALM}
The update for $x^{k+1}$ is the essential step in the $p$th-order ALM. In general, $x^{k+1}$  has no closed-form solution, and an iterative algorithm is needed to solve it. Denote 
\begin{equation}\label{nn1}
  \psi (x) = ( \lambda ^{k} ) ^T (A x-b)+\frac{\beta^{\frac{1}{p} } }{1+\frac{1}{p} }\left\lVert A x-b\right\rVert ^{1+\frac{1}{p} },
\end{equation}
 and the update for $x^{k+1}$ can be simplified as 
\begin{equation}\label{n1}
  x^{k+1} \in \arg \min  \left\{ \psi (x)+f\left(x\right)  \vert x \in  \mathcal{X} \right\}.
\end{equation}
The following lemma demonstrates that the gradient of $\psi (x)$ is Hölder Lipschitz continuous.
\begin{lemma}\label{newlemma_4}
  For any $p \geq 1$, $\nabla \psi (x)$ is  Hölder Lipschitz continuous, i.e.
  \begin{equation}\label{n2}
    \left\lVert \nabla \psi (y)-\nabla \psi (x)\right\rVert \leq M_p \left\lVert y-x\right\rVert ^{\frac{1}{p} }, \quad \forall x,y \in \mathbb{R} ^{n},
  \end{equation}
  where $M_p = \left[\left(p+1\right)2^{p-2} \right]^{\frac{1}{p}} \beta ^{\frac{1}{p} }\left\lVert A\right\rVert ^{1+\frac{1}{p} }$.
\end{lemma}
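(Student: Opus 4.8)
The plan is to reduce everything to the H\"older continuity of the single map $i_p$, since the rest is just the chain rule and submultiplicativity of the operator norm. First I would compute $\nabla\psi$. The linear term $(\lambda^k)^T(Ax-b)$ contributes the constant gradient $A^T\lambda^k$, which cancels in any difference, and since (by the remark after \cref{notation4}) $i_p$ is the gradient of $\frac{1}{1+1/p}\lVert\cdot\rVert^{1+1/p}$, the chain rule gives
\[
\nabla\psi(x) = A^T\lambda^k + \beta^{1/p}A^T i_p(Ax-b).
\]
Subtracting the values at $x$ and $y$ and using $\lVert A^T z\rVert \le \lVert A\rVert\,\lVert z\rVert$ yields
\[
\lVert\nabla\psi(y)-\nabla\psi(x)\rVert \le \beta^{1/p}\lVert A\rVert\,\lVert i_p(Ay-b)-i_p(Ax-b)\rVert ,
\]
so \cref{n2} follows once I control the last factor.

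The core step is therefore the claim that $i_p$ is itself H\"older continuous of order $1/p$, namely
\[
\lVert i_p(u)-i_p(v)\rVert \le \big[(p+1)2^{p-2}\big]^{1/p}\lVert u-v\rVert^{1/p},\qquad \forall\, u,v\in\mathbb{R}^n .
\]
Granting this, I would set $u=Ay-b$ and $v=Ax-b$, note $\lVert u-v\rVert = \lVert A(y-x)\rVert \le \lVert A\rVert\,\lVert y-x\rVert$, and substitute into the previous display; the exponents on $\lVert A\rVert$ add to $1+1/p$ and the constant assembles into exactly $M_p$. The case $p=1$ is a useful sanity check: there $i_1(w)=w$, $\nabla\psi$ is affine, and the bound collapses to ordinary Lipschitz continuity with constant $\beta\lVert A\rVert^2$, matching $M_1$.

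To establish the H\"older bound for $i_p$, I would work from its Jacobian. Writing $\alpha=1/p\in(0,1]$ and $i_p(w)=\lVert w\rVert^{\alpha-1}w$ for $w\ne 0$, a direct computation gives $D i_p(w)=\lVert w\rVert^{\alpha-1}I+(\alpha-1)\lVert w\rVert^{\alpha-3}ww^T$, whose eigenvalues are $\lVert w\rVert^{\alpha-1}$ and $\alpha\lVert w\rVert^{\alpha-1}$; since $0<\alpha\le1$ this gives the operator-norm bound $\lVert Di_p(w)\rVert = \lVert w\rVert^{\alpha-1}$. Integrating along the segment $w(t)=v+t(u-v)$ reduces the problem to the scalar estimate $\int_0^1\lVert v+t(u-v)\rVert^{\alpha-1}\,dt \le C\,\lVert u-v\rVert^{\alpha-1}$, which I would prove by projecting onto the line through $u$ and $v$, using $\lVert w(t)\rVert^{\alpha-1}\le|\tau|^{\alpha-1}$ (where $\tau$ is the signed coordinate along that line) to reduce to $\int|\tau|^{\alpha-1}\,d\tau$ over an interval of length $\lVert u-v\rVert$; this integral is finite precisely because $\alpha>0$. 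An equivalent and sharper route, better suited to recovering the exact constant $(p+1)2^{p-2}$, is to pass to the inverse map $j(a)=\lVert a\rVert^{p-1}a=i_p^{-1}(a)$, reduce the claim to the lower bound $\lVert j(a)-j(b)\rVert \ge \frac{1}{(p+1)2^{p-2}}\lVert a-b\rVert^{p}$, and obtain this from a uniform-convexity (Clarkson-type) inequality for $\frac{1}{p+1}\lVert\cdot\rVert^{p+1}$ together with Cauchy--Schwarz.

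The main obstacle is exactly this core estimate: the Jacobian of $i_p$ blows up like $\lVert w\rVert^{\alpha-1}$ near the origin, so the naive mean-value bound fails whenever the segment $[u,v]$ passes near $0$, and the whole point is that the singularity is integrable for $\alpha>0$ and must be controlled uniformly in the position of the segment. A secondary but genuine difficulty is bookkeeping the constant: several natural arguments (the segment-integral bound, or a two-case split by relative magnitudes) give valid H\"older estimates but with a larger constant than $(p+1)2^{p-2}$, so recovering the stated constant requires the convexity-based estimate rather than the crude one. Finally I would verify continuity of $i_p$ at the origin to extend the inequality to the cases $u=0$ or $v=0$ by a limiting argument.
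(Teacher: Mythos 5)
Your proposal is correct and, in its ``sharper route,'' coincides with the paper's own proof: the paper likewise reduces everything to the H\"older continuity of $i_p$ and obtains the constant $\left[(p+1)2^{p-2}\right]^{1/p}$ by applying the uniform-convexity inequality for $\frac{1}{p+1}\lVert\cdot\rVert^{p+1}$ to $u=i_p(Ax-b)$ and $v=i_p(Ay-b)$ (whose images under the inverse map $a\mapsto\lVert a\rVert^{p-1}a$ are exactly $Ax-b$ and $Ay-b$) together with Cauchy--Schwarz. The Jacobian-integration argument you sketch first is a correct but unnecessary alternative that, as you yourself note, yields a worse constant.
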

\begin{proof}
  $\nabla \psi (x) = A^{T}\lambda ^{k}+\beta^{\frac{1}{p} }A^{T}i_p(Ax-b)$. Here, we only give the proof of the case that $\nabla \psi (x)\neq 0$ and $\nabla \psi (y)\neq 0$, 
  and the proofs for the other cases are similar.
  \begin{equation}\label{n3}
    \begin{aligned}
    \left\lVert \nabla \psi (x)-\nabla \psi (y)\right\rVert  &= \left\lVert \beta ^{\frac{1}{p} }A^{T}\left(\frac{Ax-b}{\left\lVert Ax-b\right\rVert ^{1-\frac{1}{p} }} -\frac{Ay-b}{\left\lVert Ay-b\right\rVert ^{1-\frac{1}{p} }}\right) \right\rVert  \\
    & \leq \beta ^{\frac{1}{p} }\left\lVert A\right\rVert \left\lVert \frac{Ax-b}{\left\lVert Ax-b\right\rVert ^{1-\frac{1}{p} }} -\frac{Ay-b}{\left\lVert Ay-b\right\rVert ^{1-\frac{1}{p} }} \right\rVert.
    \end{aligned}
   \end{equation}
   Due to the the uniformly convexity of the function $\frac{1}{p+1} \left\lVert x\right\rVert^{p+1} $, the following inequality holds 
   \begin{equation}\label{n4}
    \begin{aligned}
      \frac{2}{p+1} \left(\frac{1}{2} \right)^{p-1} \left\lVert u-v\right\rVert^{p+1}&\leq \left(u-v\right)^{T} (\left\lVert u\right\rVert^{p-1}u-\left\lVert v\right\rVert^{p-1}v )  \\
    & \leq \left\lVert u-v\right\rVert \left\lVert \left\lVert u\right\rVert^{p-1}u-\left\lVert v\right\rVert^{p-1}v\right\rVert .
    \end{aligned}
   \end{equation}
  Taking $u = \frac{Ax-b}{\left\lVert Ax-b\right\rVert^{1-\frac{1}{p} } } $ and $v = \frac{Ay-b}{\left\lVert Ay-b\right\rVert^{1-\frac{1}{p} } } $ in \cref{n4}, we have
  \begin{equation}\label{n5}
    \begin{aligned}
      \left\lVert \frac{Ax-b}{\left\lVert Ax-b\right\rVert ^{1-\frac{1}{p} }} -\frac{Ay-b}{\left\lVert Ay-b\right\rVert ^{1-\frac{1}{p} }} \right\rVert  &\leq \left[\left(p+1\right)2^{p-2} \right]^{\frac{1}{p}} \left\lVert Ax-Ay\right\rVert ^{\frac{1}{p} }  \\
    & \leq \left[\left(p+1\right)2^{p-2} \right]^{\frac{1}{p}} \left\lVert A\right\rVert^{\frac{1}{p} } \left\lVert x-y\right\rVert ^{\frac{1}{p} }.
    \end{aligned}
   \end{equation}
   Combining \cref{n3} and \cref{n5}, we get the final result \cref{n2}.
\end{proof}

Thus, the universal fast gradient \cite{nesterov2015universal} can be used to solve the subproblem \cref{n1}. The upper bound for the number of iterations required to obtain $\epsilon $-solution of \cref{n1} ($\psi(x^{k})+f(x^{k})-\psi(x^{\ast})-f(x^{\ast}) \leq \epsilon$) is 
\begin{equation}\label{n6}
  k \leq \left(\frac{2^{\frac{3p+5}{2p} }M_p}{\epsilon } \right)^{\frac{2p}{p+3} } R^{\frac{p+1}{p+3} },
\end{equation}
where $M_p$ is defined in \cref{newlemma_4}, and $R$ is the distance from a starting point to the solution. 

\subsection{Stopping  criteria for the pth-order ALM}
In this part, we discuss the stopping criteria under the assumption that $\mathcal{X} = \mathbb{R} ^{n} $ in \cref{new24}. In this case, the optimality conditions for \cref{new24} are primal and dual feasibility
\begin{equation}\label{new57}
    Ax^{\ast}-b=0, \qquad 0 \in \partial f(x^{\ast})+A^{T}\lambda ^{\ast},
\end{equation}
respectively. By the definition of the $p$th-order ALM \cref{new34},
\begin{equation}\label{new58}
  \begin{aligned}
   0&\in \partial f(x^{k+1})+A^{T}(\lambda ^{k}+\beta ^{\frac{1}{p} }i_p(Ax^{k+1}-b)) \\ &= \partial f(x^{k+1})+A^{T}\lambda ^{k+1} .
  \end{aligned}
\end{equation} 
This implies that the iterate $(x^{k+1},\lambda ^{k+1})$ generated by the $p$th-order ALM always satisfies the dual feasibility. Thus, the convergence of the primal residual $Ax^{k+1}-b$ yields the optimality. Define $r^{k}$ as
\begin{equation}\label{new59}
    r^{k} = \left\lVert Ax^{k}-b\right\rVert,
\end{equation}
and a reasonable stopping criterion is that $r^{k}$ is sufficiently small, 
\begin{equation}\label{new60}
  r^{k} \leq \varepsilon ,
\end{equation}
where $\varepsilon$ is the  tolerances for the primal  feasibility.

\section{Numerical experiments}\label{sec6}
In this section, we demonstrate the performance of the $p$th-order ALM with some options for  $p$ by applying it to  some common test problems. All  experiments are implemented in MATLAB R2020b on a computer of 16 GB RAM, AMD Core R7-4800H CPU, @ 2.90GHz.

 Here, we adopt the universal fast gradient method to solve the x-subproblem approximately. Denote $G(x)= x-\mathsf{P}\mathrm{rox}_{f}\left(x- \nabla \psi (x) \right)$, where $\psi (x)$ is defined in \cref{nn1}. Obviously, when $G(z^{k})=0$, $z^{k}$ is the solution of the x-subproblem. Therefore, the termination criteria for the universal fast gradient method can be 
\begin{equation}\label{new64}
  \left\lVert G(z^{k}) \right\rVert \leq \varepsilon _{sub} ,
\end{equation}
where $\varepsilon _{sub}$ controls the accuracy of the solution to the x-subproblem.

\subsection{Basis pursuit}
Basis pursuit (BP) problem \cite{chen2001atomic, mota2011distributed} is a basic problem in the theory of compressed sensing, and it aims to  find a sparse solution to an underdetermined system of linear equations. The BP problem can be expressed as 
\begin{equation}\label{exper_1}
  \min \left\{\left\lVert x\right\rVert _1  | \; Ax=b\right\} ,
\end{equation} 
where $A \in \mathbb{R}^{m\times n}$, $b \in \mathbb{R}^{m}$ with  $m<n$. Here, we randomly generate $A$, where $A_{ij}$ follows the  normal distribution $ \mathcal{N} (0,1)$. To generate $b$, we first  create a $n\times 1$ sparse random vector $u_0$ with the density $0.2$, and then let $b=Au_0$. In this experiment, we take $m=100$ and $n = 500$. 

\begin{figure}[tbhp]
  \centering
  \subfloat[$\beta = 0.5$]{\label{fig:a}\includegraphics[scale=0.14]{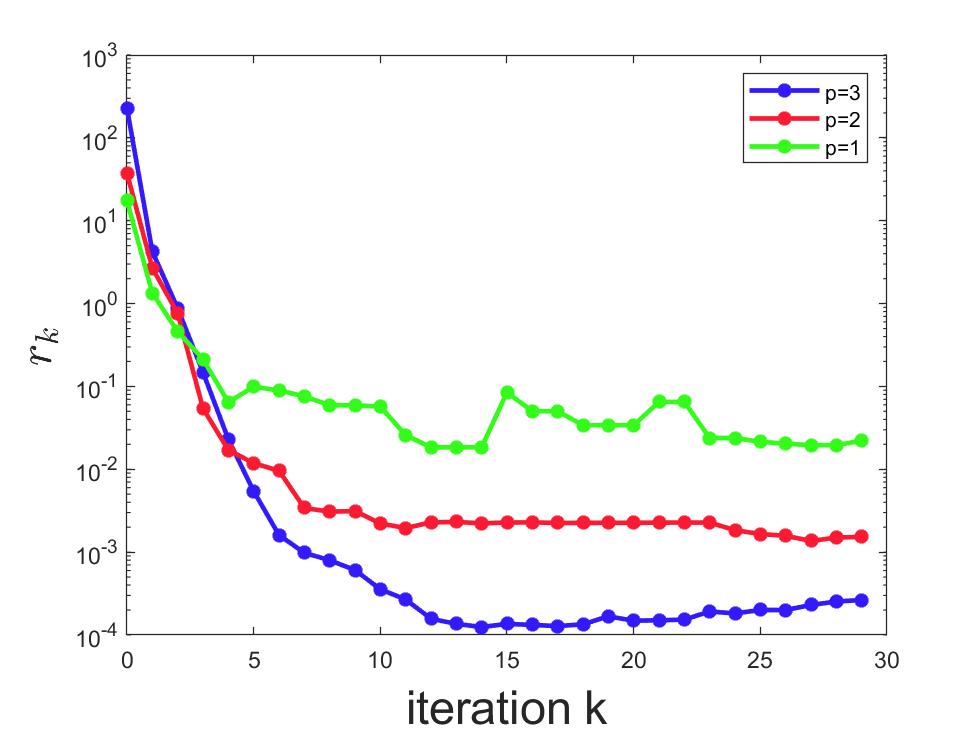}}
  \subfloat[$\beta = 2$]{\label{fig:b}\includegraphics[scale=0.14]{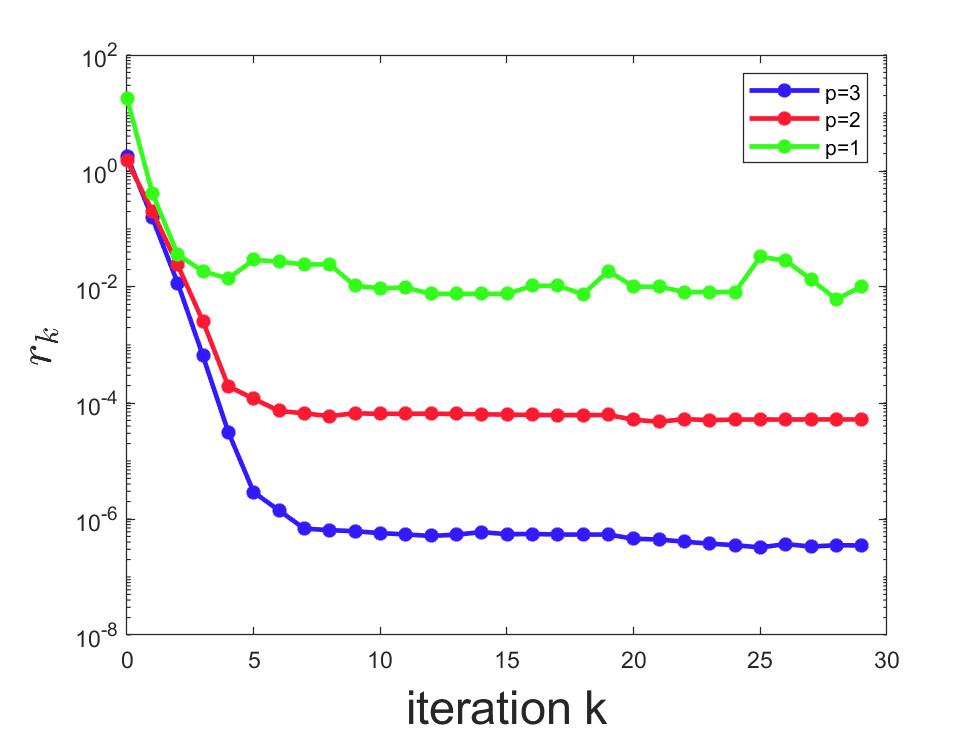}}\\
  \subfloat[$\beta = 5$]{\label{fig:c}\includegraphics[scale=0.14]{1.jpg}}
  \subfloat[$\beta = 10$]{\label{fig:d}\includegraphics[scale=0.14]{2.jpg}}
  \caption{The curve of  norms of primal residual $r_k$ with different $\beta$ in the BP experiment.}
  \label{fig1}
\end{figure}
\begin{figure}[tbhp]
  \centering
  \subfloat[$\varepsilon _{sub}= 0.1$]{\label{fig:a1}\includegraphics[scale=0.14]{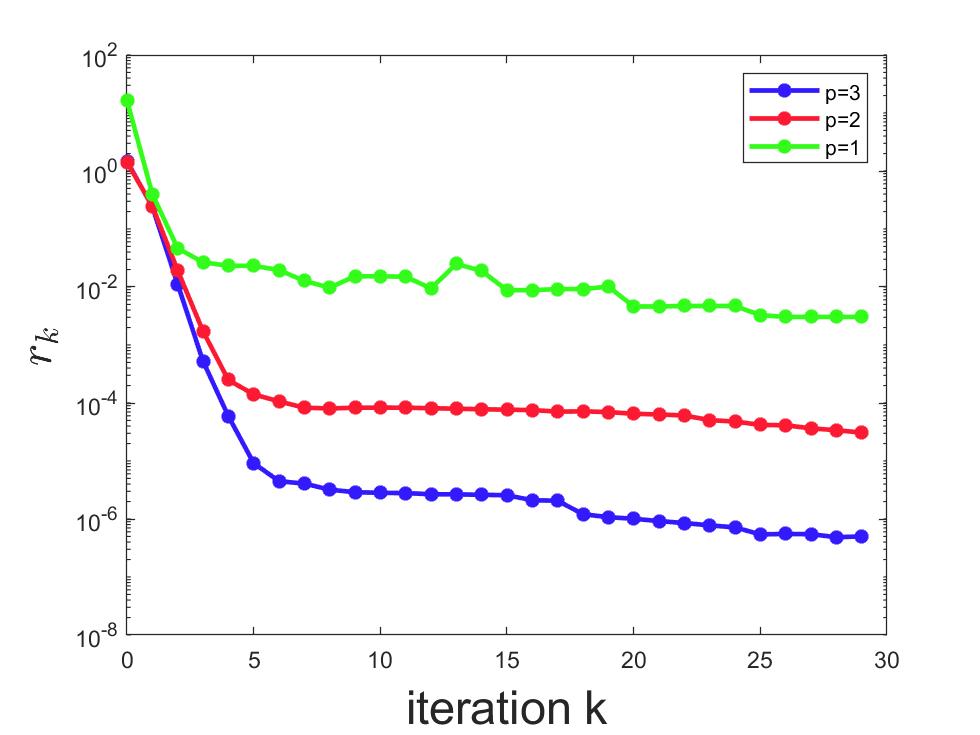}}
  \subfloat[$\varepsilon _{sub} = 0.05$]{\label{fig:b1}\includegraphics[scale=0.14]{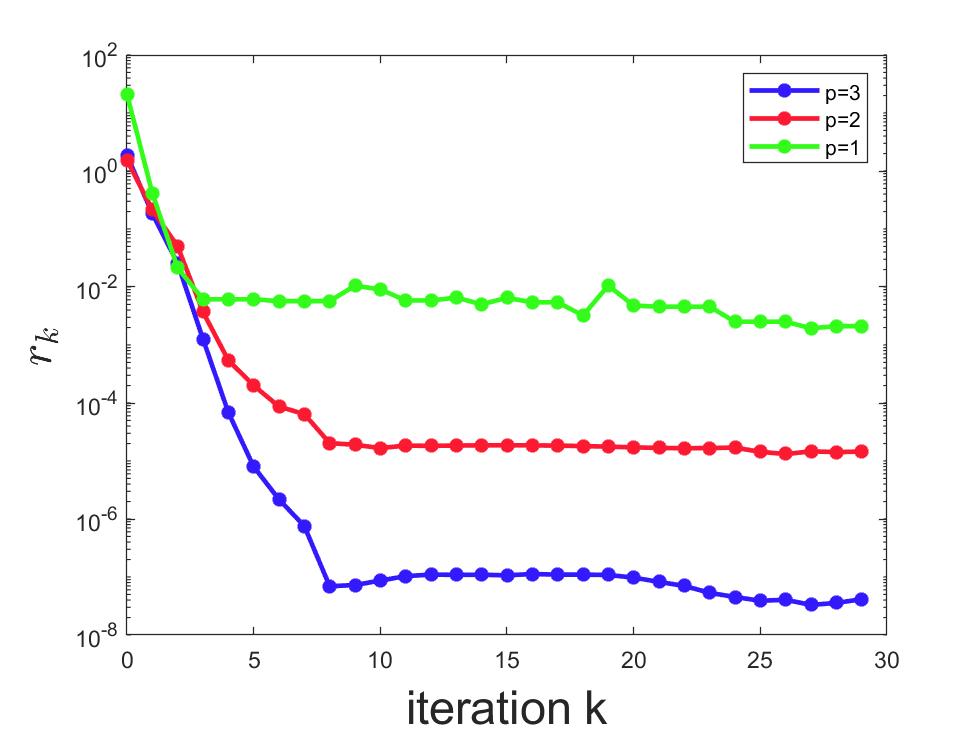}}\\
  \subfloat[$\varepsilon _{sub} = 0.01$]{\label{fig:c1}\includegraphics[scale=0.14]{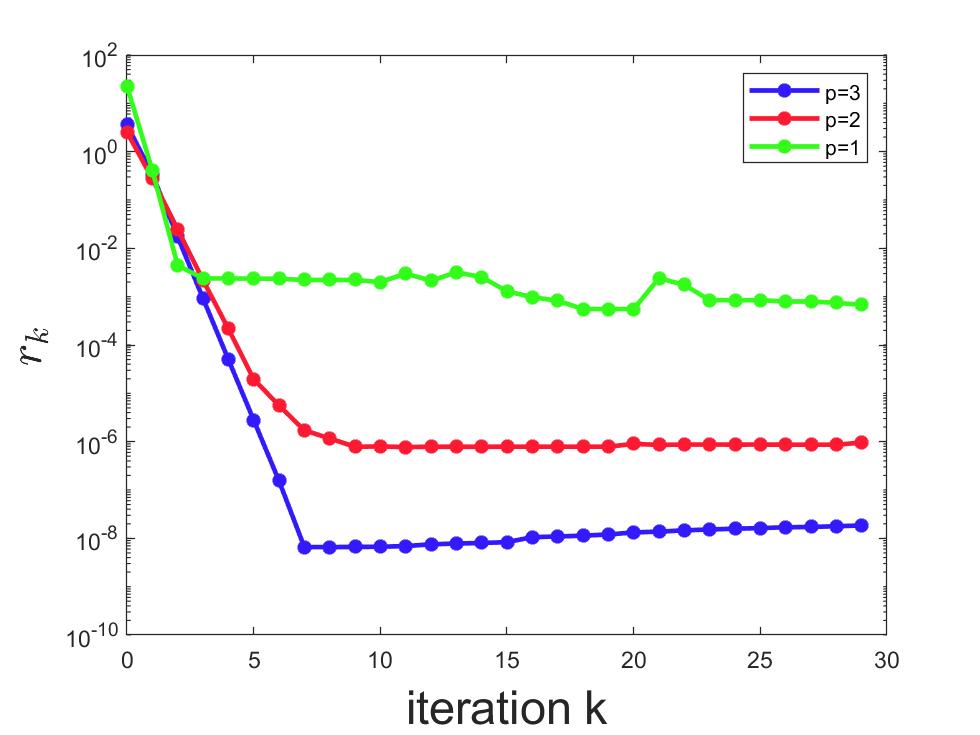}}
  \subfloat[$\varepsilon _{sub} = 0.005$]{\label{fig:d1}\includegraphics[scale=0.14]{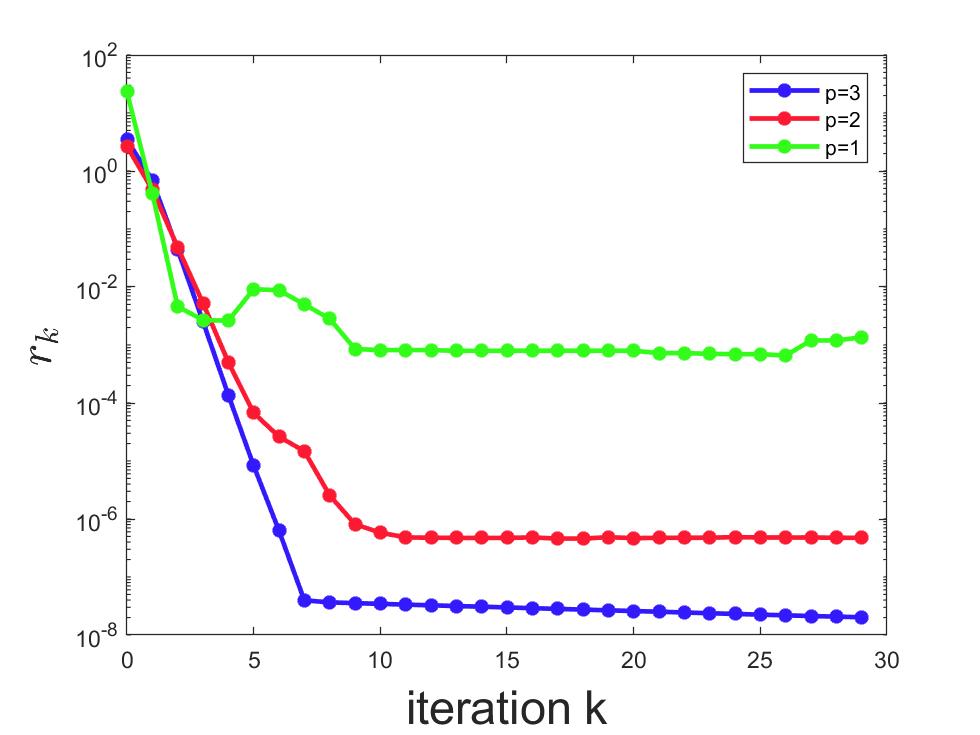}}
  \caption{The curve of  norms of primal residual $r_k$ with different $\varepsilon _{sub}$.}
  \label{fig2}
\end{figure}
We apply the $p$th-order ALM to the BP  problem with $f(x)=\left\lVert x\right\rVert _1 $. In this  experiment, we first fix $\varepsilon _{sub}=10^{-1}$ to illustrate the the numerical performance of $p$th-order ALM ($p=1,2,3$) with some different $\beta$.
\cref{fig1} shows the convergence curve for the BP problem. Obviously, with some different  $\beta$ ($\beta=0.5,2,5,10$), the 2nd-order and 3rd-order ALM converge much faster than the original ALM ($p=1$). Next, we fix $\beta = 2$ to illustrate the the numerical performance of $p$th-order ALM ($p=1,2,3$) with some different $\varepsilon _{sub}$. \cref{fig2} shows the convergence curve for the BP problem with some different accuracy of the solution to the x-subproblem. Similarly, in this setting, the 2nd-order and 3rd-order ALM converge much faster than the original ALM ($p=1$).

\begin{figure}[tbhp]
  \centering
  \subfloat[$\varepsilon _{sub}= 0.1$]{\label{fig:a2}\includegraphics[scale=0.18]{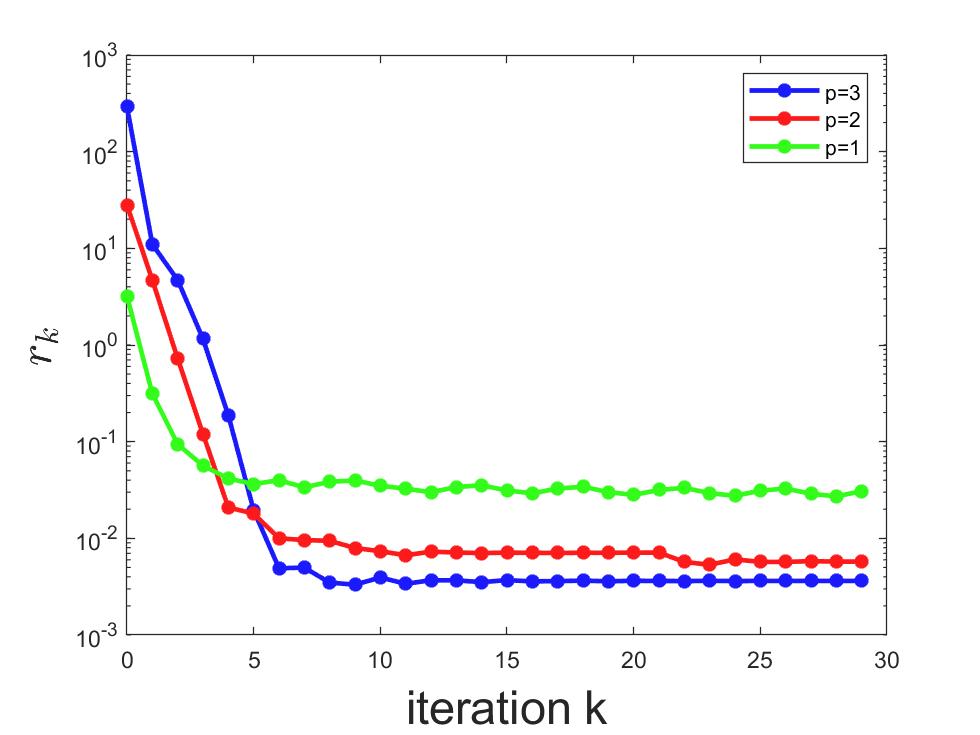}}
  \subfloat[$\varepsilon _{sub} = 0.05$]{\label{fig:b2}\includegraphics[scale=0.18]{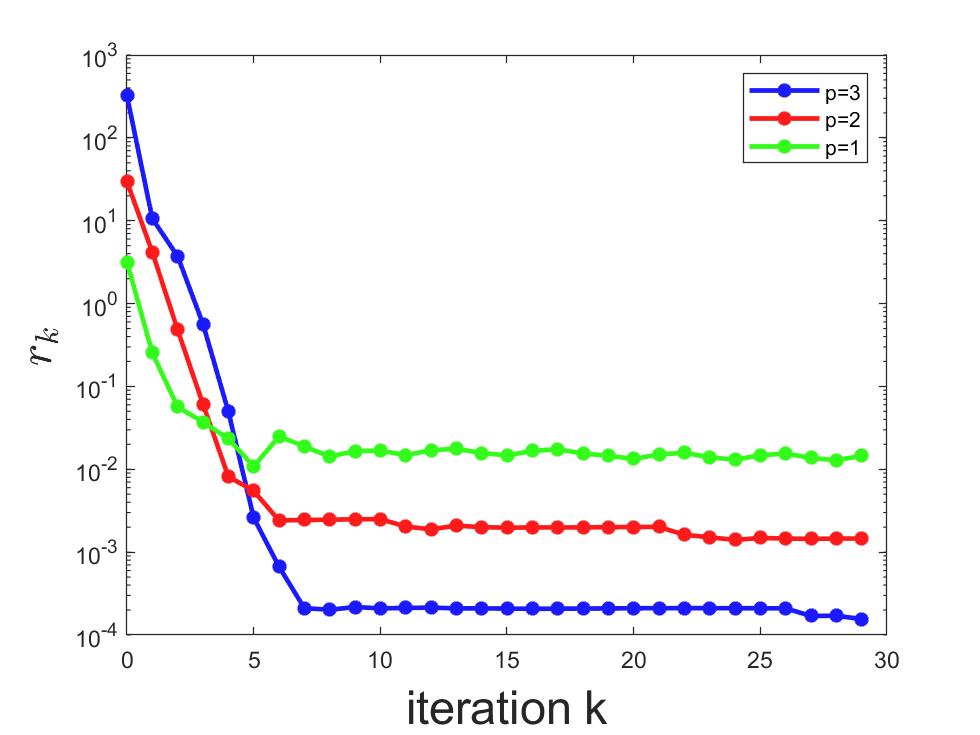}}\\
  \subfloat[$\varepsilon _{sub} = 0.01$]{\label{fig:c2}\includegraphics[scale=0.18]{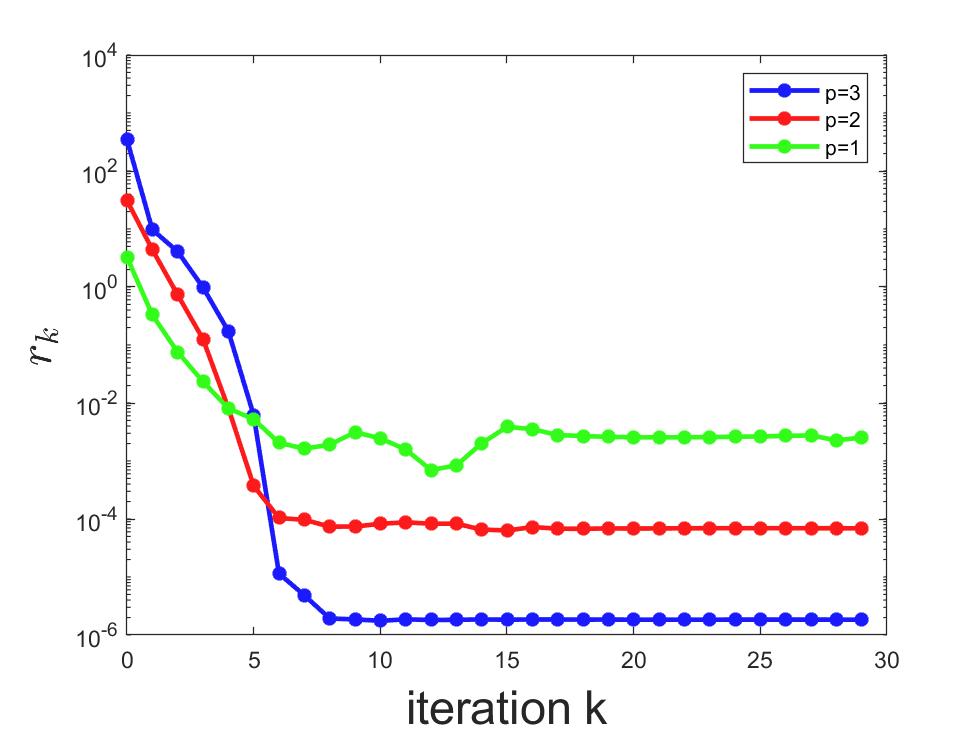}}
  \subfloat[$\varepsilon _{sub} = 0.005$]{\label{fig:d2}\includegraphics[scale=0.18]{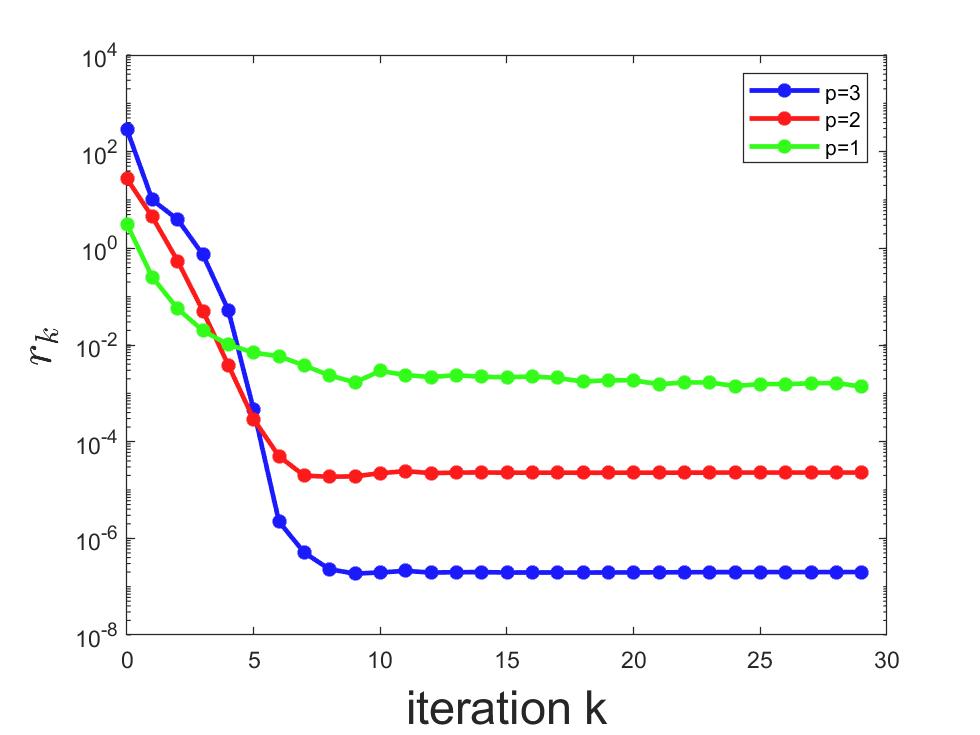}}
  \caption{The curve of  norms of primal residual $r_k$ with different $\varepsilon _{sub}$ in the MC experiment.}
  \label{fig3}
\end{figure}
\subsection{Matrix completion} Matrix completion (MC) problems \cite{recht2011simpler,candes2010matrix} are widely used in recommendation systems and image processing. A common model of the MC problem is the following optimization \cite{cai2010singular}
\begin{equation}\label{exper_113}
  \min \left\{\left\lVert X\right\rVert _{\ast }  | \; X_{ij}=M_{ij}, \; (ij)\in \Lambda \right\} ,
\end{equation} 
where $\left\lVert \cdot \right\rVert _{\ast }$ is the nuclear norm of a given matrix, $M\in \mathbf{R}^{m\times n}$ and $\Lambda$ is the elements indices set of $M$. In this experiment, we set $m=n=50$, and generate  a $50\times 50$ sparse random matrix $M$ with the density $0.1$, and the parameter $\beta $ in $p$th-order ALM is set as 5.

\cref{fig3} shows the convergence curve for the MC problem with some different accuracy of the solution to the x-subproblem. The result illustrates that high-order ALM ($p>1$) can  quickly obtain the high-precision solution to the primal problem with relatively low accuracy of the solution to the x-subproblem.

\section{conclusion}
In this paper, we present the $p$th-order PPA for the VI problem. This can be view as the generalization of the high-order proximal-point iteration proposed by Nesterov \cite{nesterov2021inexact2}. Based on the $p$th-order PPA, we also present the $p$th-order ALM for the linearly constrained convex optimization problem.  Both the $p$th-order PPA and the $p$th-order ALM have the convergence rate $O \left(1/k^{p/2}\right)$.

When the  linearly constrained convex optimization problem is separable, ADMM is an efficient method to solve it. The ADMM can be viewed as the splitting  version of the ALM. In this study, we have developed the $p$th-order ALM. Thus, how to extend ADMM to the high-order case is worth studying.

\bibliographystyle{siamplain}
\bibliography{references}
\end{document}